\date{}
\newtheorem{theorem}{Theorem}
\newtheorem{corollary}{Corollary}
\newtheorem{proposition}{Proposition}
\begin{document}

\centerline{{\bf Vladimir I. Bogachev}}

\vskip .2in

\centerline{{\bf Compactification of spaces of measures and pseudocompactness}}

\vskip .2in

Abstract. We prove pseudocompactness of a Tychonoff space $X$ and the space $\mathcal{P}(X)$ of Radon probability measures on it with the
weak topology under the condition that the Stone--\v{C}ech compactification of the space $\mathcal{P}(X)$ is homeomorphic
to the space $\mathcal{P}(\beta X)$ of Radon probability measures on the Stone--\v{C}ech compactification of the space~$X$.

\vskip .1in

Keywords: Stone--\v{C}ech compactification, space of Radon probability measures, weak topology, pseudocompactness

AMS Subject Classification: 28A33, 54D35

\vskip .1in

Let $X$ be a a Tychonoff  (i.e., completely regular) space, let $\beta X$ be its Stone-\v{C}ech  compactification, and let
$\mathcal{P}(X)$ be the space of Radon probability measures on $X$ equipped with the weak topology.
In the recent paper \cite{B24}, the question about coincidence of the space $\mathcal{P}(\beta X)$ of Radon probability
measures on $\beta X$ with the  Stone--\v{C}ech compactification $\beta \mathcal{P}( X)$ of the
space $\mathcal{P}(X)$.
This coincidence is understood in the following sense: the extension by continuity  of the natural embedding
$\mathcal{P}( X)\to \mathcal{P}(\beta X)$ onto the compactification is one-to-one.
The following result has been proved  in the cited paper.

\begin{theorem}
{\rm(i)} Pseudocompactness of the space of measures $\mathcal{P}(X)$ implies pseudocompactness of~$X$.

{\rm(ii)}  The injectivity of the indicated extension of the embedding $\mathcal{P}( X)\to \mathcal{P}(\beta X)$ onto
$\beta \mathcal{P}( X)$ implies
pseudocompactness of both spaces~$X$ and~$\mathcal{P}(X)$.
\end{theorem}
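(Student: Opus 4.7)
The plan is to treat (i) and (ii) separately, using the Dirac embedding $x \mapsto \delta_x$ as the bridge between $X$ and $\mathcal{P}(X)$. For (i), I would argue contrapositively. Assuming $X$ is not pseudocompact, extract an infinite discrete family $\{U_n\}_{n\ge 1}$ of pairwise disjoint nonempty open sets in $X$, and fix $x_n \in U_n$. The candidate obstruction in $\mathcal{P}(X)$ is
\[
V_n = \{\mu \in \mathcal{P}(X) : \mu(U_n) > 1/2\}, \qquad n \ge 1.
\]
Each $V_n$ is open (by the portmanteau theorem: $\mu \mapsto \mu(U_n)$ is lower semicontinuous for open $U_n$), nonempty because $\delta_{x_n}\in V_n$, and the $V_n$ are pairwise disjoint because the $U_n$ are. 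The heart of the argument is local finiteness of $\{V_n\}$ in $\mathcal{P}(X)$. I would use that discreteness passes to closures, so $\{\overline{U_n}\}$ is still pairwise disjoint and $\sum_n \mu(\overline{U_n})\le 1$ for every $\mu$. Given $\mu$, pick $N$ with $\sum_{n>N}\mu(\overline{U_n})<1/2$; the set $F = \bigcup_{n>N}\overline{U_n}$ is closed, as the union of a discrete family of closed sets, and the open neighbourhood $\{\nu : \nu(F)<1/2\}$ of $\mu$ misses $V_n$ for every $n>N$ because such $\nu$ satisfy $\nu(U_n)\le\nu(F)<1/2$. This produces an infinite locally finite family of nonempty open sets in $\mathcal{P}(X)$, contradicting pseudocompactness.

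For (ii), by (i) it suffices to derive pseudocompactness of $\mathcal{P}(X)$ from the injectivity of $\tilde e : \beta\mathcal{P}(X)\to \mathcal{P}(\beta X)$; pseudocompactness of $X$ then follows automatically. I would work contrapositively. Given an unbounded $\phi\in C(\mathcal{P}(X))$ and $\mu_n$ with $\phi(\mu_n)\to\infty$, Tychonoff bump functions (as in (i)) show that, after thinning, $\{\mu_n\}$ is a discrete $C^*$-embedded copy of $\mathbb{N}$ in $\mathcal{P}(X)$, so its closure in $\beta\mathcal{P}(X)$ is a homeomorphic copy of $\beta\mathbb{N}$. Since $\phi$ is real-valued on $\mathcal{P}(X)$ but tends to infinity along $\mu_n$, every cluster point of $\{\mu_n\}$ in the compact space $\mathcal{P}(\beta X)$ must lie in $\mathcal{P}(\beta X)\setminus\mathcal{P}(X)$.

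The main obstacle is converting this configuration into an actual failure of injectivity of $\tilde e$. The hypothesis only directly guarantees that $\mathcal{P}(X)$ is $C^*$-embedded in $\mathcal{P}(\beta X)$ (universal property of $\beta\mathcal{P}(X)$: bounded continuous functions extend), whereas pseudocompactness is the strictly stronger condition of $C$-embeddedness, so the gap must be bridged using the measure-theoretic structure of the embedding. Each ultrafilter limit $p\text{-}\lim_n \mu_n \in \mathcal{P}(\beta X)$ is a Radon measure determined by the scalar ultralimits $p\text{-}\lim_n\int f\,d\mu_n$ for $f \in C_b(X)=C(\beta X)$, and the hope is to exploit the scarcity of such cylinder data --- together with the unboundedness of $\phi$ --- to produce two distinct points of $\beta\mathcal{P}(X)$ collapsed by $\tilde e$. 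Identifying the explicit mechanism for this collapse, in light of the tension with the fact that the injectivity hypothesis itself would upgrade $\{\mu_n\}$ to being $C^*$-embedded in $\mathcal{P}(\beta X)$ as well, is the genuine technical hurdle of (ii).
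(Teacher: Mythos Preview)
The paper does not itself prove this theorem; it quotes it from~\cite{B24}. So there is no line-by-line comparison available for part~(i). Your argument for~(i) is correct: from an unbounded $f\in C(X)$ one extracts a discrete family of cozero sets $U_n=f^{-1}((a_n,b_n))$ with $b_n<a_{n+1}$, the closures remain a discrete (hence pairwise disjoint, and closure-preserving) family, and your semicontinuity bookkeeping showing that $\{V_n\}$ is locally finite in $\mathcal{P}(X)$ goes through verbatim.

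For part~(ii) there is a genuine gap, and you name it yourself. Having found $\mu_n$ with $\phi(\mu_n)\to\infty$ and observed that their accumulation points in $\mathcal{P}(\beta X)$ lie outside $\mathcal{P}(X)$, you still need two distinct points of $\beta\mathcal{P}(X)$ with the same image under~$\tilde e$. You correctly note that the hypothesis amounts to $C^*$-embedding of $\mathcal{P}(X)$ in $\mathcal{P}(\beta X)$, that pseudocompactness would be $C$-embedding, and that $C^*$-embedding alone cannot force pseudocompactness (witness $\mathbb{N}\subset\beta\mathbb{N}$). The ``explicit mechanism for this collapse'' is never supplied, and it is not routine: your sequence $\{\mu_n\}$ has closure $\beta\mathbb{N}$ in $\beta\mathcal{P}(X)$, but nothing you have arranged prevents its closure in $\mathcal{P}(\beta X)$ from also being a copy of~$\beta\mathbb{N}$.

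The paper's own contribution (its Theorem~2) gives a short and complete route to the pseudocompactness of $\mathcal{P}(X)$, in fact under the weaker hypothesis that $\beta\mathcal{P}(X)$ and $\mathcal{P}(\beta X)$ are merely homeomorphic. The key external input (Banaschewski; Henriksen--Isbell) is that a Tychonoff space whose Stone--\v{C}ech compactification is locally connected must be pseudocompact. Since $\mathcal{P}(\beta X)$ is a convex compact set in a locally convex space, it is locally connected; hence so is $\beta\mathcal{P}(X)$, and $\mathcal{P}(X)$ is pseudocompact. Part~(i) then yields pseudocompactness of~$X$. This bypasses the ultrafilter analysis entirely and is the argument you should compare against.
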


On the other hand,  examples of noncompact spaces~$X$ are constructed in ~\cite{B24}  for which the indicated coincidence holds.
Such spaces include the open interval of countable ordinals  $[0,\omega_1)$
and the deleted Tychonoff planck $[0,\omega_0]\times [0,\omega_1]\backslash (\omega_0,\omega_1)$ (where $\omega_0$ is the smallest countable
ordinal and  $\omega_1$ is the first uncountable ordinal), which shows  that
coincidence is possible without countable compactness. As noted in~\cite{B24}, if in place of the Stone--\v{C}ech  compactifications we take
the Samuel compactifications with suitable uniformities, then  coincidence always holds.
Finally, a number of questions has been posed in \cite{B24} in connection with compactifications of spaces of measures.
In this paper, an answer is given to one of these questions and the main result from \cite{B24} is reinforced, namely, it  is shown that the
 spaces $\mathcal{P}(X)$ and $X$ are pseudocompact if the spaces $\mathcal{P}(\beta X)$  and $\beta \mathcal{P}( X)$ are homeomorphic.
In particular, the spaces  $\mathcal{P}(\beta \mathbb{N})$ and $\beta  \mathcal{P}(\mathbb{N})$ are not homeomorphic.
 However, in the general case the question about equality of the  spaces $\mathcal{P}(\beta X)$ and $\beta \mathcal{P}( X)$
  in the sense explained above provided they are homeomorphic remains open. No characterization of spaces for which the equality holds is known.

 For a Tychonoff space $X$, we denote   by $C_b(X)$ the set of all bounded continuous functions on~$X$
and by $\beta X$ its Stone--\v{C}ech  compactification (a compact space in which $X$ is embedded homeomorphically as an everywhere
 dense subset with the property that every function from  $C_b(X)$ is the restriction to $X$ of a function from
 $C_b(\beta X)$). The linear space of all bounded Radon measures on $X$ is denote by $\mathcal{M}(X)$.
We recall that a signed measure $\mu$ on the Borel
   $\sigma$-algebra of the space $X$ is called Radon if  its  positive and negative parts are Radon,
and a nonnegative Borel measure $\mu$ is Radon if, for every Borel set $B$ and every $\varepsilon>0$,
there is a  compact set $K\subset B$ such that $\mu(B\backslash K)<\varepsilon$.

   The weak topology on the space $\mathcal{M}(X)$ is defined by the seminorms
   $$
   p_f(\mu)=\biggl|\int_X f\, d\mu\biggr|, \quad f\in C_b(X).
   $$
The   space $\mathcal{P}(X)$ of  Radon probability measures is equipped with the induced  weak topology.

The coincidence of $\beta\mathcal{P}(X)$ and $\mathcal{P}(\beta X)$ in the indicated sense is equivalent to the property that every
bounded continuous function on $\mathcal{P}(X)$ is uniformly approximated by functions of the form
$$
\mu\mapsto F(l_{f_1}(\mu),\ldots,l_{f_n}(\mu)), \quad l_{f_i} (\mu)=\int_X f_i\, d\mu, \quad f_i\in C_b(X),
$$
where $F$ is a polynomial on $\mathbb{R}^n$ (see~\cite{B24}).

The following result reinforces  assertion~(ii) of Theorem~1 from \cite{B24} with a shorter justification
(in the part concerning pseudocompactness of $\mathcal{P}(X)$, but the conclusion about  $X$ itself is based on assertion~(i)
of Theorem~1).

\begin{theorem}
Suppose that $X$ is a Tychonoff  space  such that  the spaces $\beta\mathcal{P}(X)$
and $\mathcal{P}(\beta X)$ are homeomorphic. Then $\mathcal{P}(X)$ and $X$ are pseudocompact.
\end{theorem}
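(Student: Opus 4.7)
The plan is to argue that $\mathcal{P}(X)$ is pseudocompact and then invoke assertion~(i) of Theorem~1 from~\cite{B24} to conclude that $X$ itself is pseudocompact, exactly as the author announces. To handle $\mathcal{P}(X)$, I would first set up the natural map $j\colon\mathcal{P}(X)\to\mathcal{P}(\beta X)$ sending a Radon measure on $X$ to its Borel extension on $\beta X$. Since $C(\beta X)\to C_b(X)$ is a surjective isometry and the weak topology on $\mathcal{P}(X)$ is induced by $C_b(X)$, the map $j$ is a topological embedding, and density of convex combinations of Dirac masses at points of $X$ in $\mathcal{P}(\beta X)$ shows that its image is dense. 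The universal property of the Stone--\v{C}ech compactification then produces a continuous surjection $\widehat{\jmath}\colon\beta\mathcal{P}(X)\to\mathcal{P}(\beta X)$ extending $j$, while the hypothesis supplies a (possibly different) homeomorphism $h\colon\beta\mathcal{P}(X)\to\mathcal{P}(\beta X)$; the content of Theorem~2 is precisely that one may dispense with the injectivity of $\widehat{\jmath}$ in favour of this abstract homeomorphism~$h$.

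The next step is to use the classical characterization that a Tychonoff space $Y$ is pseudocompact if and only if $Y$ is $G_\delta$-dense in $\beta Y$, equivalently every non-empty zero-set in $\beta Y$ meets $Y$. For $Y=\mathcal{P}(X)$ this is equivalent, via $h$, to $h(\mathcal{P}(X))$ being $G_\delta$-dense in $\mathcal{P}(\beta X)$. The advantage of working on the right-hand side is that the continuous real functions on $\mathcal{P}(\beta X)$ are well-controlled: by the Stone--Weierstrass theorem they form the uniform closure of the algebra generated by the evaluation functionals $L_f(\mu)=\int_{\beta X}f\,d\mu$ for $f\in C(\beta X)$. Pulling these functionals back through $h$ expresses every element of $C(\beta\mathcal{P}(X))\cong C_b(\mathcal{P}(X))$ as a uniform limit of polynomials in certain bounded continuous functions $h^{*}L_f$ on $\mathcal{P}(X)$, which is the version of the characterization recalled in the introduction of the paper adapted to the abstract homeomorphism.

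The hard part is that $h$ need not intertwine with $\widehat{\jmath}$, so the pulled-back functionals $h^{*}L_f$ are a priori only bounded continuous functions on $\mathcal{P}(X)$ and need not coincide with the integration functionals $l_f(\mu)=\int_X f\,d\mu$. To close the argument one has to show that this abstract generation of $C_b(\mathcal{P}(X))$ nevertheless forces every continuous real-valued function on $\mathcal{P}(X)$ (bounded or not) to be bounded: concretely, given a continuous $\phi\colon\mathcal{P}(X)\to\mathbb{R}$, I would assume it is unbounded and derive a contradiction by transferring the resulting non-empty zero-set in $\beta\mathcal{P}(X)\setminus\mathcal{P}(X)$, via $h$, to a non-empty zero-set in $\mathcal{P}(\beta X)\setminus h(\mathcal{P}(X))$, and showing that this is incompatible with the structural features of $\mathcal{P}(\beta X)$ inherited through $h$ by $\beta\mathcal{P}(X)$. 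Once pseudocompactness of $\mathcal{P}(X)$ is secured, assertion~(i) of Theorem~1 immediately yields pseudocompactness of $X$ and completes the proof.
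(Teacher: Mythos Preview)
Your outline sets up the right objects and correctly isolates the difficulty: the abstract homeomorphism $h$ need not agree with the canonical extension $\widehat{\jmath}$, so the image $h(\mathcal{P}(X))$ is merely \emph{some} dense subset of $\mathcal{P}(\beta X)$ with no a~priori relation to integration functionals. But the proposal then stops precisely where the proof has to begin. You say you will derive a contradiction from a non-empty zero-set in $\mathcal{P}(\beta X)\setminus h(\mathcal{P}(X))$ by appealing to ``structural features of $\mathcal{P}(\beta X)$ inherited through $h$'', yet you never name a single such feature, and the $G_\delta$-density reformulation by itself gives nothing: a compact convex set can certainly contain dense $C^*$-embedded subspaces that miss a zero-set, so without a further ingredient there is no contradiction to reach. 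The Stone--Weierstrass description of $C(\mathcal{P}(\beta X))$ that you invoke is also of no help here, because after pulling back through $h$ the generators $h^{*}L_f$ are arbitrary bounded continuous functions on $\mathcal{P}(X)$, and saying that $C_b(\mathcal{P}(X))$ is generated by some family of bounded continuous functions is a tautology.

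The missing idea---and the whole content of the paper's proof---is to exploit a \emph{topological} property of $\mathcal{P}(\beta X)$ that is automatically transported by any homeomorphism: local connectedness. Since $\mathcal{P}(\beta X)$ is a convex compact subset of a locally convex space, it is locally connected; hence $\beta\mathcal{P}(X)$ is locally connected; and a classical theorem of Banaschewski and of Henriksen--Isbell (see also Walker) states that a Tychonoff space whose Stone--\v{C}ech compactification is locally connected must be pseudocompact. This single observation replaces your entire zero-set argument and requires no analysis of $h(\mathcal{P}(X))$ at all. Pseudocompactness of $X$ then follows from assertion~(i) of Theorem~1, as you noted.
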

\begin{proof}
According to \cite[Theorem~2]{B24} it suffices to verify pseudocompactness of the space $\mathcal{P}( X)$.
Now we apply the following known fact (see \cite{Banas}, \cite{Hen} or \cite[p.~221, Theorem~9.3]{W}): if a Tychonoff space a possesses locally
connected Stone-\v{C}ech compactification, then is it pseudocompact. Recall that
a space is called locally connected if every point possesses a base of connected open neighborhoods.
This fact is applied to the space $\mathcal{P}(X)$, the compactification of which in the considered case
is locally connected, since by assumption it is homeomorphic to the convex compact set $\mathcal{P}(\beta X)$ in the
 locally convex space $\mathcal{M}(X)$ of Radon measures on $X$ equipped with the weak
topology. Of course, the conclusion of the theorem remains valid under the formally weaker condition of local connectedness of the
compactification of the space $\mathcal{P}(X)$, but one can hardly regard this condition simpler that the direct requirement of pseudocompactness
of~$\mathcal{P}(X)$.
\end{proof}

\begin{corollary}
Suppose that a Tychonoff space $X$ can be continuously mapped onto an everywhere dense set
in a space~$Y$ that is not  pseudocompact (for example, in a noncompact metric or Souslin space).
Then the spaces $\beta\mathcal{P}(X)$ and $\mathcal{P}(\beta X)$ are not homeomorphic.

In particular, this is true if $X$ is a noncompact metric or Souslin space.
\end{corollary}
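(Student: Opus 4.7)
The plan is to argue by contraposition using Theorem~2. Suppose, for contradiction, that $\beta\mathcal{P}(X)$ and $\mathcal{P}(\beta X)$ are homeomorphic. Theorem~2 then forces $X$ to be pseudocompact, and my task is to turn the existence of the map $f\colon X\to Y$ with dense image into a contradiction with the fact that $Y$ is not pseudocompact.

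The core step is a transfer of pseudocompactness through continuous dense images. First I would note that $f(X)$, being a continuous image of the pseudocompact space $X$, is itself pseudocompact. Then, given any $h\in C_b(Y)\cup C(Y,\mathbb{R})$, I would restrict $h$ to $f(X)$: since $f(X)$ is pseudocompact, $h|_{f(X)}$ is bounded, say $|h|\le M$ on $f(X)$. Because $f(X)$ is dense in $Y$ and $h$ is continuous, we get
$$
h(Y)\subseteq \overline{h(f(X))}\subseteq [-M,M],
$$
so every continuous real function on $Y$ is bounded, i.e.\ $Y$ is pseudocompact. This contradicts the hypothesis on $Y$ and completes the first part.

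For the parenthetical and the final assertion, I would justify that the examples listed really fail pseudocompactness. For a noncompact metric space this is classical, as in the metrizable setting pseudocompactness, countable compactness and compactness coincide; a noncompact metric space carries a continuous unbounded real function (e.g.\ the distance to a point that fails to attain its supremum along a sequence without convergent subsequence). For a noncompact Souslin space I would appeal to Lindelöfness of Souslin spaces together with the standard fact that a Lindelöf pseudocompact Tychonoff space is compact. The ``in particular'' statement then follows by taking $Y=X$ and $f=\mathrm{id}$ in the first assertion.

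The main obstacle is really only the density transfer step in the first paragraph; once that is in hand, the corollary is a direct application of Theorem~2, and the verification that the listed example classes of $Y$ are non-pseudocompact is routine.
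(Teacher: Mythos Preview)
Your argument is correct and is essentially the paper's own proof, just run in contrapositive form: the paper directly picks an unbounded continuous function on $Y$ and observes that its composition with the map $X\to Y$ is unbounded on $X$ (using the density of the image exactly as you do), concluding that $X$ is not pseudocompact and invoking Theorem~2. Your detour through ``$f(X)$ is pseudocompact'' and your explicit justification of why noncompact metric or Souslin spaces fail pseudocompactness are fine but not needed---the paper takes both for granted.
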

\begin{proof}
Under this condition  the space $X$ is not pseudocompact, since the composition of an unbounded continuous function on
the space $Y$ with the given mapping from $X$ to $Y$ is also continuous and unbounded.
\end{proof}

It is known  (see \cite{Colmez} or \cite[Theorem~1.2.2]{AA}) that a pseudocompact  space is Baire, i.e., the
intersection of every sequence of open everywhere dense sets is everywhere dense.
Hence we obtain the following conclusion.

\begin{corollary}
If the spaces  $\beta\mathcal{P}(X)$ and $\mathcal{P}(\beta X)$ are homeomorphic, then  $\mathcal{P}(X)$ and $X$ are Baire spaces.
\end{corollary}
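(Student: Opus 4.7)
The plan is to combine Theorem~2 with the quoted fact (attributed to \cite{Colmez} and \cite[Theorem~1.2.2]{AA}) that every pseudocompact space is Baire. There is essentially no new mathematics to do: the statement is a direct corollary, so the whole point of the proof is to chain the two results cleanly.

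First I would invoke Theorem~2: under the homeomorphism hypothesis $\beta\mathcal{P}(X)\cong \mathcal{P}(\beta X)$, both $X$ and $\mathcal{P}(X)$ are pseudocompact. Then I would apply the cited result to each of these two spaces separately, concluding that the intersection of any sequence of open everywhere dense subsets is everywhere dense in $X$, and likewise in $\mathcal{P}(X)$. Since both spaces are Tychonoff (and $\mathcal{P}(X)$ is a Hausdorff subspace of the locally convex space $\mathcal{M}(X)$ with the weak topology), the Baire property is meaningful and the implication pseudocompact $\Rightarrow$ Baire applies without any further hypothesis.

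The only thing one might want to check, rather than a genuine obstacle, is that the pseudocompact-implies-Baire statement is being used in its usual form for Tychonoff (or at least Hausdorff completely regular) spaces, which is exactly the setting here. No separate argument is needed for $\mathcal{P}(X)$ beyond noting that it inherits complete regularity from $\mathcal{M}(X)$. Thus the entire proof is two lines: apply Theorem~2; apply the cited theorem to each of $X$ and $\mathcal{P}(X)$.
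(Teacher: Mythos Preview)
Your proposal is correct and matches the paper's own argument exactly: the corollary is deduced immediately from Theorem~2 together with the cited fact that every pseudocompact (Tychonoff) space is Baire. There is nothing further in the paper's proof beyond this two-step chain.
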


In turn, this implies the Baire property of all powers $X^n$ (see \cite{Koum}, the case of metric spaces was
considered in~\cite{Woj}). Though, in our case this  also follows from  pseudocompactness of~$X$
(see, e.g., \cite[Theorems 5.3.2 and 5.3.3]{Dor}).
Generally speaking, the Baire property of $X$ does not imply the Baire property of $\mathcal{P}(X)$ even in the case of metric spaces,
but it would be interesting to clarify whether the Baire property of $\mathcal{P}(X)$ follows from pseudocompactness of~$X$.
Properties of spaces of measures connected with the Baire property were studied in \cite{Brown} and \cite{BrownCox}.
Note that in \cite{Krup} an example is constructed in ZFC  (the Zermelo--Fraenkel set theory with the axiom of choice)
of a separable metric space~$X$ that is not
Polish, but all closed subsets of  $\mathcal{P}(X)$ are Baire spaces.

There are two other concepts connected with the name of Baire:
the Baire $\sigma$-algebra generated by continuous functions on a Tychonoff  space $X$  and
measures on this $\sigma$-algebra, called Baire measures.
The space of Baire measures is  also equipped with the weak topology.
Let $\mathcal{P}_\sigma(X)$ denote the space of  Baire probability measures with the weak topology.
Pseudocompactness of $X$ is equivalent to compactness of $\mathcal{P}_\sigma(X)$. Hence in the case where
$\beta\mathcal{P}(X)$ and $\mathcal{P}(\beta X)$ are homeomorphic and  every Baire measure on~$X$ has a Radon extension
we obtain the usual compactness of~$X$, not just  pseudocompactness.

The equality $\beta\mathcal{P}(X)=\mathcal{P}(\beta X)$ implies  that the Baire  $\sigma$-algebra
of the space $\mathcal{P}(X)$ is also generated by the linear functionals of the form
$$
\mu\mapsto \int_X f\, d\mu, \quad f\in C_b(X).
$$
This follows from the fact that, once the indicated equality holds, every bounded continuous function on $\mathcal{P}(X)$
is uniformly approximated by polynomials in such functionals (as already noted above).
The Baire  $\sigma$-algebra of the whole  space of measures $\mathcal{M}(X)$ is always  generated by such functionals, see
\cite[Theorem~6.10.6]{B07}, but it is not clear whether this is true for $\mathcal{P}(X)$. The set
$\mathcal{P}(X)$ is Baire in $\mathcal{M}(X)$ under a rather restrictive condition, as the next
assertion shows.

\begin{proposition}
The set $\mathcal{P}(X)$ belongs to the  Baire  $\sigma$-algebra of the space  $\mathcal{M}(X)$  precisely when
there is a countable collection of continuous functions on~$X$ separating points, i.e., $X$ can be continuously and injectively mapped into~$\mathbb{R}^\infty$.
\end{proposition}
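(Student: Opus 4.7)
The plan is to use the characterization quoted just before the statement: the Baire $\sigma$-algebra of $\mathcal{M}(X)$ is generated by the functionals $l_f$, $f\in C_b(X)$ (via \cite[Theorem~6.10.6]{B07}). Since every member of a generated $\sigma$-algebra depends on only countably many generators, the condition $\mathcal{P}(X)\in\mathrm{Baire}(\mathcal{M}(X))$ is equivalent to the existence of a countable family $\{f_n\}\subset C_b(X)$ with $\mathcal{P}(X)\in\sigma(l_{f_n}:n\in\mathbb N)$. Both implications can then be recast in terms of countable families of bounded continuous functions on $X$.

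For necessity, I would show that any such family $\{f_n\}$ must separate points. Given distinct $x,y\in X$, the Radon signed measures $\mu_t=(1-t)\delta_x+t\delta_y$ all satisfy $l_1(\mu_t)=1$ and $l_{f_n}(\mu_t)=(1-t)f_n(x)+tf_n(y)$. If $f_n(x)=f_n(y)$ for every $n$, the latter is independent of $t$, so $\mu_0=\delta_x\in\mathcal{P}(X)$ and $\mu_2=-\delta_x+2\delta_y\notin\mathcal{P}(X)$ are indistinguishable by $\sigma(l_{f_n})$, contradicting $\mathcal{P}(X)\in\sigma(l_{f_n})$. Hence $\{f_n\}$ separates points, giving the required countable continuous separating family (equivalently, a continuous injection $X\to\mathbb R^\infty$).

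For sufficiency, starting from a countable separating family $\{g_n\}\subset C(X)$, I normalize to $h_n:=\arctan g_n\in C_b(X)$ (still separating) and form the continuous injection $T\colon X\to K:=[-\pi/2,\pi/2]^{\mathbb N}$ into a compact metric space. The pushforward $T_*\colon\mathcal{M}(X)\to\mathcal{M}(K)$ is continuous in the weak topology because $l_g\circ T_*=l_{g\circ T}$ for $g\in C(K)$. Separability of $C(K)$ lets me pick a countable dense subset $\{\varphi_k\}$ of its non-negative cone, so $\mathcal{M}_+(K)=\bigcap_k\{l_{\varphi_k}\ge 0\}$ is Baire in $\mathcal{M}(K)$. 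Pulling back by the continuous (hence Baire-measurable) map $T_*$ and intersecting with the Baire set $\{l_1=1\}$ then yields $\mathcal{P}(X)$, provided one proves the identity $\mathcal{M}_+(X)=T_*^{-1}(\mathcal{M}_+(K))$.

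That identity is the main obstacle. The inclusion $\subseteq$ is immediate, so the work lies in $\supseteq$: if $T_*\mu\ge 0$, then $\mu\ge 0$. My plan is to exploit that $T$ is injective on the whole space together with the Radon property. Taking a Hahn decomposition $X=P\sqcup N$ of the Jordan components $\mu^\pm$, by inner compact regularity I can place $\mu^\pm$ on $\sigma$-compact sets $E^+\subset P$, $E^-\subset N$ of full $\mu^\pm$-measure. Continuity of $T$ makes $T(E^\pm)$ $\sigma$-compact in $K$, and injectivity of $T$ makes them disjoint Borel sets carrying $T_*\mu^+$ and $T_*\mu^-$ respectively. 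Hence $T_*\mu^+\perp T_*\mu^-$, so $T_*\mu=T_*\mu^+-T_*\mu^-$ is the Jordan decomposition of $T_*\mu$; non-negativity of $T_*\mu$ forces $T_*\mu^-=0$, and injectivity of $T$ gives $\mu^-(X)=T_*\mu^-(K)=0$, proving $\mu\ge 0$ and completing the argument.
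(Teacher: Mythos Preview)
Your proof is correct. The necessity direction is essentially identical to the paper's: both pick two points $a,b$ not separated by the $f_n$ and exhibit a signed measure (you use $-\delta_x+2\delta_y$, the paper uses $2\delta_a-\delta_b$) that agrees with a Dirac measure on all the functionals $l_{f_n}$ but is not a probability measure.

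For sufficiency you take a genuinely different route. The paper stays inside $X$: it enlarges the countable separating family to a countable algebra closed under truncation, and then, given a signed $\mu$ satisfying all the inequalities $l_{f_n}(\mu)\ge 0$, it uses Radon regularity to find a compact $K$ with $\mu(K)<0$ and $\mu^{+}(K)=0$, approximates a bump function over $K$ uniformly on a larger compact set via Stone--Weierstrass, and derives a contradiction. You instead push everything forward to a compact metric space $K$ via the injection $T$, use separability of $C(K)$ to write $\mathcal{M}_+(K)$ as a countable intersection of half-spaces, and pull back; the substantive work becomes the lemma that $T_*$ reflects positivity for Radon measures when $T$ is injective, which you prove cleanly via $\sigma$-compact supports of the Jordan components and mutual singularity of their images. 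Your argument is more structural and isolates a reusable fact (injective continuous maps reflect positivity on Radon measures); the paper's argument is more hands-on and makes explicit exactly which countable family of half-spaces cuts out $\mathcal{P}(X)$. Both rely on the same ingredients --- inner compact regularity and the separating power of the functions --- but package them differently.
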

\begin{proof}
Suppose that there is a countable collection of continuous functions on~$X$ separating points. We can pass to a countable collection of bounded
functions with this property. To this collection we  add all polynomials with rational coefficients in finitely many given functions,
and then substitutions of the obtained functions into the function $t\mapsto \min(\max(t,0),1)$.
This gives a countable collection $\{f_n\}\subset C_b(X)$. Let us show that the set $\mathcal{P}(X)$ is determined in $\mathcal{M}(X)$
as the intersection of the sets of the form $\{l_f(\mu)\ge 0\}$ and $\{l_1(\mu)=1\}$, where $l_f(\mu)$ is the integral against the measure $\mu$
of a nonnegative function from the collection~$\{f_n\}$ and $l_1(\mu)=\mu(X)$. It suffices to verify that
the set of the indicated inequalities determines the set
of nonnegative measures. Suppose that there is a signed measure $\mu$ the integrals against  which of nonnegative functions from $\{f_n\}$
are nonnegative.
We can assume that $|\mu|(X)\le 1$, where $|\mu|$ is the total variation of the measure~$\mu$.
Let $K$ be a compact set for which $\mu(K)=-c<0$ and $\mu^{+}(K)=0$, where $\mu^{+}$ is the nonnegative part of~$\mu$.
Let us take a larger compact set $S$ with $\mu^{+}(X\backslash S)<c/4$ and also an open set $U$ with $K\subset U$ and $\mu^{+}(U)<c/4$.
Since $X$ is completely regular, there exists a continuous function $g$ on $X$ with $0\le g\le 1$, $g|_K=1$ and $g|_{X\backslash U}=0$.
 Due to our  choice of the collection $\{f_n\}$ and the Stone--Weierstrass theorem this collection contains a function $f_n$ such that
 $0\le f_n\le 1$ and $\sup_S |g(x)-f_n(x)|<c/4$. Then taking into account the estimates
 $|\mu|(S)\le 1$,  $|\mu|(X\backslash S)<c/4$ and $|\mu|(U\backslash K)<c/4$ we obtain
 $$
 \int_X f_n\, d\mu\le \int_S f_n\, d\mu +c/4\le
   \int_S g\, d\mu +c/2=\mu(K)+\int_{U\backslash K} g\, d\mu \le \mu(K)+ 3c/4=-c/4,
 $$
  which gives a contradiction.

Suppose now that $\mathcal{P}(X)$ belongs to the Baire $\sigma$-algebra in $\mathcal{M}(X)$. Then it has the form
$$
P=\{\mu\in \mathcal{M}(X)\colon (F_1(\mu), F_2(\mu),\ldots)\in B\},
$$
where $B$ is a Borel set in $\mathbb{R}^\infty$,
$$
F_i(\mu)=\int_X f_i\, d\mu,\quad f_i\in C_b(X).
$$
Observe that the functions $f_i$ separate points of the  space~$X$. Indeed, otherwise there are two
different points $a,b\in X$ such that $f_i(a)=f_i(b)$ for all~$i$.
Hence $F_i(\delta_a)=F_i(\delta_a + (\delta_a-\delta_b))$ for all~$i$. Since $\delta_a\in P$,
we obtain  $\delta_a + (\delta_a-\delta_b)\in P$, which is false.
\end{proof}

Some equivalent conditions to the existence of a sequence of continuous functions separating points can be found
in \cite{KoumS84} (see also \cite[Theorem~8.10.39]{B07}).

In  \cite{B24}, the example $X=\beta\mathbb{N}\backslash \{p\}$ was considered, where $p\in \mathbb{N}^*:=\beta \mathbb{N}\backslash \mathbb{N}$.
Then $X$ is countably compact and $\beta X=\beta \mathbb{N}$  (see \cite[p.~239, Exercise~58]{AP} or \cite[Section~2.14]{Vaug}), and if $p$
is a  $P$-point, then the space $\mathcal{P}(X)$ is also countably compact, as shown in \cite{B24}.
A point $p\in \mathbb{N}^*$ is called a $P$-point if the intersection of every countable collection of its  open neighborhoods contains its
open neighborhood in~$\mathbb{N}^*$. Such points exist under the continuum hypothesis (CH)
or under its negation and Martin's axiom (MA), but their absence is consistent with ZFC,
see  \cite[Exercise~55 in Chapter~IV]{AP}, \cite[p.~138]{GillJer}, \cite[Corollary~1.7.2]{Mill},
\cite[p.~107]{W},  \cite{Sz}, and~\cite{Wimmers}).
Since the Stone--\v{C}ech compactification is one-point here, the space $\mathcal{P}(\beta X)$ coincides with the convex span
of the set $\mathcal{P}(X)$ and the Dirac measure $\delta_p$ at the point~$p$.
However, it is not clear whether it coincides with $\beta\mathcal{P}(X)$.

It remains unknown whether the equality $\beta\mathcal{P}(X)=\mathcal{P}(\beta X)$ follows from pseudocompactness of $X$ or $\mathcal{P}(X)$,
and also from countable compactness of one of these spaces.
It is unclear whether it is preserved by multiplication by compact spaces (recall  that pseudocompactness has this property,
see~\cite{AA}). In the examples constructed in \cite{B24}  when this equality holds for noncompact spaces their Stone--\v{C}ech
 compactifications are one-point. It is of interest to consider the case of a general space $X$ with a one-point Stone--\v{C}ech
compactification  (such a  space is automatically pseudocompact, see \cite[Theorem~1.3.8]{AA}).
A~known particular case  is the Mr\'owka
space  (see  \cite[Exercise~3.6.I]{Engel} or a detail discussion in~\cite{AA}),
for which $M=S\cup D$, where $S$ is countable and everywhere dense in~$M$, all points of $S$ are isolated, and $D$ is an uncountable
discrete closed set disjoint with~$S$.
Here $\beta M=M\cup \{p\}$, $\mathcal{P}(M)$ consists of probability measures concentrated on countable sets,
$\mathcal{P}(\beta M)$ coincides with the convex span of $\mathcal{P}(M)$ and the Dirac measure at the point~$p$ of the one-point
compactification.
Another example worth of consideration is the subspace $S$ in the Tychonoff cube $[0,1]^{[0,1]}$ consisting
of functions with countable support (the so-called $\Sigma$-product).
This subspace is sequentially compact and everywhere dense and $\beta S=[0,1]^{[0,1]}$, see
\cite[Section~2.7.13]{Engel}.

The question is also open about possible noncoincidence of the cardinalities of $\beta\mathcal{P}(X)$ and $\mathcal{P}(\beta X)$.
Since $\mathcal{P}(\beta X)$ is a convex compact set in the locally convex space of Radon measures,
its cardinality $\kappa$ coincides with its  countable power, i.e.,  $\kappa=\kappa^{\omega_0}$, see~\cite{Lip}.

I thank K.A.\,Afonin for useful discussions.

The paper is supported by the Ministry of Science and Education of Russia within the programme of the Moscow Center of
Fundamental and Applied Mathematics, agreement N 075-15-2022-284.


Lomonosov Moscow State university, Moscow, Russia;
National Research University ``Higher School of Economics'', Moscow, Russia.

\end{document}